\newtheorem{theorem}{Theorem}[section]
\newtheorem{lemma}[theorem]{Lemma}
\newtheorem{corollary}[theorem]{Corollary}
\newtheorem{proposition}[theorem]{Proposition}
\newtheorem{remark}[theorem]{Remark}
\theoremstyle{definition}
\newcommand{\Ric}{{\rm Ric}}
\newcommand{\n}{\nabla}
\begin{document}

\title[Classification of generalized Yamabe solitons]
{Classification of generalized Yamabe solitons under vanishing conditions on the Weyl, Cotton, and Cao-Chen tensors}

\author{Shun Maeta}
\address{Department of Mathematics,
Faculty of Education,
Chiba University, 1-33, Yayoicho, Inage, Chiba, Chiba, 263-8522, Japan.}
\curraddr{}
\email{shun.maeta@gmail.com~{\em or}~shun.maeta@chiba-u.jp}
\thanks{The author is partially supported by the 	
Grant-in-Aid for Scientific Research (C), No.23K03107, Japan Society for the Promotion of Science.\\
Department of Mathematics,
Chiba University, 1-33, Yayoicho, Inage, Chiba, 263-8522, Japan.\\
{\it Email address}: shun.maeta@gmail.com~{\em or}~shun.maeta@chiba-u.jp}

\subjclass[2010]{53C21, 53C25, 53C20}

\date{}

\dedicatory{}

\keywords{Gradient conformal solitons; Gradient Einstein-type manifolds; Yamabe solitons; locally conformally flat}

\commby{}

\begin{abstract}
We study complete conformal gradient solitons, a class containing gradient Yamabe solitons and many generalized Yamabe-type structures, including gradient almost Yamabe, gradient k-Yamabe, and gradient h-almost Yamabe solitons, and, after a change of the potential function, gradient Einstein-type manifolds with $\alpha=0$ and $\beta\neq0$ (in particular, quasi-Yamabe solitons). In this paper, we classify complete nontrivial locally conformally flat conformal gradient solitons. This result contributes to an analogue of Perelman's conjecture for Yamabe-type solitons. Moreover, we show that under nonnegative scalar curvature, every nonflat soliton is rotationally symmetric. We also obtain classifications assuming the Cotton or Cao-Chen tensor vanishes.

\end{abstract}

\maketitle

\section{Introduction}\label{intro} 

Let $(M,g)$ be an $n$-dimensional Riemannian manifold. For smooth functions $F$ and $\varphi$ on $M$, $(M,g,F,\varphi)$ is called a {\it conformal gradient soliton} (cf. \cite{Yano40}, \cite{Tashiro65}, \cite{CMM12}) if
\begin{equation}\label{GCS}
\varphi g=\nabla\nabla F.
\end{equation}
If $F$ is constant, then $(M,g,F,\varphi)$ is called trivial.
This class goes back to the work of Yano and Tashiro on concircular fields, and it also appears in the warped product rigidity theory of Cheeger and Colding \cite{CC96}.
In particular, \eqref{GCS} is a natural framework for studying global geometry through a conformal potential function.

A fundamental special case is the gradient Yamabe soliton
\begin{equation}\label{YS}
(R-\rho)g=\nabla\nabla F,
\end{equation}
where $R$ is the scalar curvature and $\rho\in\mathbb{R}$.
According to the sign of $\rho$, the soliton is called steady, shrinking, or expanding.
Gradient Yamabe solitons are self-similar solutions to the Yamabe flow introduced by Hamilton \cite{Hamilton89}, and they are expected to control singularity formation and asymptotic profiles of the flow.
It is also known that compact gradient Yamabe solitons are trivial, namely, $F$ is constant (see, for example, \cite{Hsu12}).

Equation \eqref{YS} is the scalar-curvature analogue of the gradient Ricci soliton equation.
For gradient Ricci solitons, Brendle \cite{Brendle} proved Perelman's conjecture \cite{Perelman1}, that is, rotational symmetry of three-dimensional complete noncompact $\kappa$-noncollapsed gradient steady Ricci solitons with positive curvature.
This motivates the Yamabe-soliton version of Perelman's conjecture, namely, to determine natural geometric assumptions under which complete nontrivial gradient Yamabe solitons must be rotationally symmetric.

In this direction, Daskalopoulos and Sesum \cite{DS13} proved rotational symmetry for complete locally conformally flat gradient Yamabe solitons with positive sectional curvature.
They also classified radial solutions and studied their asymptotic behavior.
Soon after that, Cao, Sun and Zhang \cite{CSZ12} established the global warped product structure of complete nontrivial gradient Yamabe solitons and classified complete locally conformally flat gradient Yamabe solitons.
In particular, in the locally conformally flat setting with $R\geq0$, their results imply rotational symmetry for nontrivial nonflat complete gradient Yamabe solitons.
From a broader viewpoint, Catino, Mantegazza and Mazzieri \cite{CMM12} studied conformal gradient solitons and proved a global structure theorem under nonnegative Ricci curvature.

The purpose of this paper is to unify and extend these Yamabe-type classification results within the conformal gradient soliton framework.
A key point is that \eqref{GCS} includes many generalized Yamabe-type structures by a suitable choice of $\varphi$.
This includes gradient almost Yamabe solitons, gradient $k$-Yamabe solitons, and gradient $h$-almost Yamabe solitons (see \cite{BB13}, \cite{CL22}, \cite{Zeng20}).
It also includes, in an essential sense, gradient Einstein-type manifolds with $\alpha=0$ and $\beta\not=0$.

More precisely, following Catino, Mastrolia, Monticelli and Rigoli \cite{CMMR17}, a gradient Einstein-type manifold is defined by
\[
\alpha \mathrm{Ric}+\beta \nabla\nabla f+\mu\, df\otimes df=(\varrho R+\lambda)g,
\]
where $\alpha,\beta,\mu,\varrho\in\mathbb{R}$ and $\lambda\in C^\infty(M)$.
When $\alpha=0$ and $\beta\not=0$, we have
\[
\beta \nabla\nabla f+\mu\, df\otimes df=(\varrho R+\lambda)g.
\]
If $\mu=0$, then $f$ itself is a conformal potential function up to a constant multiple.
If $\mu\not=0$ and we set $F=\frac{\beta}{\mu}e^{(\mu/\beta)f}$, then we have
\[
\nabla\nabla F=\frac{\mu}{\beta^2}(\varrho R+\lambda)F\, g.
\]
Hence, if we take $\varphi=\frac{\mu}{\beta^2}(\varrho R+\lambda)F$, the above equation is equivalent to \eqref{GCS}. Note that if $F$ is constant, then $f$ is also constant.
Therefore gradient Einstein-type manifolds with $\alpha=0$ and $\beta\not=0$ are essentially reduced to \eqref{GCS}.
In particular, this class contains quasi-Yamabe gradient solitons as a special case.

We remark that when $\alpha\not=0$, there are many studies assuming local conformal flatness and its generalizations (cf. \cite{BBGG11}, \cite{BGG13a}, \cite{BGV16}, \cite{CC13}, \cite{FG14}). 

Our main theorem gives a uniform rotational symmetry result for this broad class, including the above gradient Einstein-type manifolds, in the locally conformally flat case.

\begin{theorem}\label{thmA}
Any complete nontrivial nonflat locally conformally flat conformal gradient soliton with $R\geq0$ is rotationally symmetric.
\end{theorem}

Theorem~\ref{thmA} can be regarded as a generalized Yamabe-soliton analogue of the rotational symmetry results mentioned above.
As an immediate consequence, all the generalized Yamabe-type solitons listed above, as well as gradient Einstein-type manifolds with $\alpha=0$ and $\beta\not=0$ (including quasi-Yamabe gradient solitons), are rotationally symmetric under the assumptions of local conformal flatness and $R\geq0$.

We also study weaker conditions.
More precisely, we prove classification theorems under the vanishing of the Cotton tensor and the Cao-Chen tensor $D$ (introduced in \cite{CC13}, see also \cite{CC12}).
In the latter case, we obtain the following low-dimensional consequence.

\begin{theorem}\label{thmB}
For $n=3,4$, any complete nontrivial nonflat conformal gradient soliton $(M^n,g,F,\varphi)$ with $D\equiv0$ and $R\geq0$ is rotationally symmetric.
\end{theorem}

The paper is organized as follows.
In Section~\ref{Proof of main}, we provide a simpler proof of Tashiro's theorem. 
In Section~\ref{LCFGCS}, we classify complete conformal gradient solitons with $C\equiv0$. Using this classification, we classify locally conformally flat conformal gradient solitons and prove Theorem~\ref{thmA}.
In Section~\ref{vanish}, we study $D\equiv0$, and prove Theorem~\ref{thmB}.

\quad\\

%%%%%%%%%%%%%%%%%%%%%%%%%%%%%%%%%%%
%%%%%%%%%%%%%%%%%%%%%%%%%%%%%%%%%%%
%%%%%%%%%%%%%%%%%%%%%%%%%%%%%%%%%%%

\section{Lemmas for the main theorem}\label{Proof of main}
In this section, we provide a proof of a structure lemma for conformal gradient solitons.
We first recall some notation and definitions.
The Riemannian curvature tensor is defined by 
$$R(X,Y)Z=-\n_X\n_YZ+\n_Y\n_XZ+\n_{[X,Y]}Z,$$
for $X,Y,Z\in \mathfrak{X}(M)$. The Ricci tensor $R_{ij}$ is defined by 
$R_{ij}=R_{ipjp},$ where $R_{ijk\ell}=g(R({\partial_i,\partial_j})\partial_k,\partial_\ell).$

Conformal gradient solitons were studied by Cheeger and Colding \cite{CC96}. They gave a characterization of warped product manifolds. Inspired by their work, we will drastically simplify the proof of Tashiro's classification theorem \cite{Tashiro65}. In 2012, Catino, Mantegazza, and Mazzieri provided another proof of Tashiro's theorem \cite{CMM12}. We also provide a much simpler proof of Tashiro's theorem. 

\begin{lemma}\label{main}
A nontrivial complete conformal gradient soliton $(M^n,g,F,\varphi)$ is either

$(1)$ compact and rotationally symmetric, or

$(2)$ rotationally symmetric and equal to the warped product
$$([0,\infty),dr^2)\times_{|\n F|}(\mathbb{S}^{n-1},{\bar g}_{S}),$$
where $\bar g_{S}$ is the round metric on $\mathbb{S}^{n-1},$ or

$(3)$ the warped product 
$$(\mathbb{R},dr^2)\times_{|\nabla F|} \left(N^{n-1},\bar g\right),$$
where the scalar curvature $\bar R$ of $N$ satisfies 
$$|\nabla F|^2R=\bar R-(n-1)(n-2)\varphi^2-2(n-1)g(\nabla F,\nabla\varphi).$$ 
\end{lemma}

\begin{remark}
By Lemma $\ref{main}$, to consider rotational symmetry of conformal gradient solitons, 
we only have to consider $(3)$ of Lemma $\ref{main}$.
\end{remark}

\begin{proof}[Proof of Lemma \ref{main}]

Let $c_0$ be a regular value of $F$, and $\Sigma_{c_0}=F^{-1}(c_0)$. Assume that $I(\ni c_0)$ is an open interval such that $F$ has no critical point in an open neighborhood $U_I=F^{-1}(I)$ of $\Sigma_{c_0}$. Then we have
$$g=\frac{1}{|\nabla F|^2}dF^2+\overline g_{\Sigma_{c_0}}=\frac{1}{|\nabla F|^2}dF^2+g_{ab}(F,x)dx^adx^b,$$
where $\overline g_{\Sigma_{c_0}}$ is the induced metric, $x=(x^2,\cdots,x^n)$ is a local coordinate system on $\Sigma_{c_0}$, and $a,b=2,3,\cdots,n.$ 

Since 
\begin{align*}
\nabla(|\nabla F|^2)=2\nabla\nabla F\nabla F=2\varphi g(\nabla F,\cdot),
\end{align*}
$|\nabla F|^2$ is constant on $\Sigma_{c}$ which is diffeomorphic to $\Sigma_{c_0}$.

On $U_I$, let $r=\int\frac{dF}{|\nabla F|}$. Then we have
$$g=dr^2+g_{ab}(r,x)dx^a d x^b.$$
Let $\nabla r:=\partial_1:=\partial_r\left(=\frac{\partial}{\partial r}\right)$. Then we have $|\nabla r|=1$ and $\nabla F=F'(r)\partial_1$. 
Without loss of generality, we may assume that $F'>0$ on $U_I$.
Assume that $J=(\alpha,\beta)$ with $F'(r)>0$ for all $r\in J$.
Since $\nabla_{\partial_1}\partial_1=0$, integral curves of $\nabla r$ are normal geodesics.
By the soliton equation, 
$$F''(r)=\varphi.$$
Thus, $\varphi$ is constant on $\Sigma_c.$
The second fundamental form can be written as 
$$B_{ab}=\frac{F''(r)}{F'(r)}g_{ab}.$$
Hence, the mean curvature can be written as $H=(n-1)\frac{F''(r)}{F'(r)}$.
By a direct computation, 
\begin{align*}
B_{ab}
=&g(\partial_1,-\nabla_a\partial_b)\\
=&-\Gamma_{ab}^1\\
=&-\frac{1}{2}g^{i\ell}\{\partial_ag_{\ell b}+\partial_bg_{a\ell}-\partial_\ell g_{ab}\}\\
=&\frac{1}{2}\partial_1g_{ab}.
\end{align*}
Thus, we have
$$\partial_1g_{ab}=2B_{ab}=2\frac{F''(r)}{F'(r)}g_{ab}.$$
Hence, we have
$$g_{ab}(r,x)=\left(\frac{F'(r)}{F'(r_0)}\right)^2g_{ab}(r_0,x).$$

Therefore, we have

$(1)$ $|\nabla F|$ and $\varphi$ are constant on $\Sigma_c$,

$(2)$ the second fundamental form of $\Sigma_c$ is $B_{ab}=\frac{\varphi}{|\nabla F|}g_{ab}$, 

$(3)$ the mean curvature $H=(n-1)\frac{\varphi}{|\nabla F|}$ is constant on $\Sigma_c$,

$(4)$ in any open neighborhood $F^{-1}((\alpha,\beta))$ of $\Sigma_c$ in which $F$ has no critical points, the soliton metric $g$ can be expressed as
$$g=dr^2+\frac{(F'(r))^2}{(F'(r_0))^2}\bar g_{r_0},$$
where $\bar g_{r_0}=g_{ab}(r_0,x)dx^adx^b$ is the induced metric on $\Sigma_c$, and $(x^2,\cdots,x^n)$ is a local coordinate system on $\Sigma_c$.  

The above argument shows that $|\nabla F|$ is constant on a regular level surface.
Set $N^{n-1}=F^{-1}(c_0)$ and $\overline g=\left(F'(r_0)\right)^{-2}\overline g_{r_0}$ for a regular value $c_0$ of $F$.
The local warped product expression obtained above extends along the integral curves of $\nabla r$ to a maximal interval $J$ on which $F'(r)=|\nabla F|>0$.
Since the warping function is $F'(r)$, every critical point of $F$ corresponds to a zero of $F'(r)$, and hence it can occur only at an endpoint of $J$.
Therefore, $F$ has at most two critical values.
After translating $r$, we may assume that one of the following holds:
$J=(-\infty,\infty),$
or
$
J=[0,\infty) \quad \text{with} \quad F'(0)=0,
$
or
$
J=[\alpha_0,\beta_0] \quad \text{with} \quad F'(\alpha_0)=F'(\beta_0)=0.
$

We consider the first case.  
By a direct calculation, we obtain the curvature formulas for the warped product with warping function $|\n F|=F'(r)>0$ (cf.~\cite{ONiell}). 

For $a,b,c,d=2,3,\cdots,n,$
\begin{align}\label{RT1}
R_{1a1b}&=-F'F'''{\bar g}_{ab},\quad R_{1abc}=0,\\
R_{abcd}&=(F')^2{\bar R}_{abcd}+(F'F'')^2(\bar g_{ad}\bar g_{bc}-\bar g_{ac}\bar g_{bd}),\notag
\end{align}
\begin{align}\label{RT2}
R_{11}=&-(n-1)\frac{F'''}{F'},\quad 
R_{1a}=0,\\
R_{ab}=&\bar R_{ab}-((n-2)(F'')^2+F'F''')\bar g_{ab},\notag
\end{align}
\begin{align}\label{RT3}
R=(F')^{-2}\bar R-(n-1)(n-2)\Big(\frac{F''}{F'}\Big)^2-2(n-1)\frac{F'''}{F'},
\end{align}
where the curvature tensors with bar are the curvature tensors of $(N,\bar g)$.

We consider the second case. Since $F$ has a unique critical point $x_0$, we have $r(x)={\rm dist}(x,x_0)$. Therefore, $\Sigma_c=\{F(x)=c\}$ is diffeomorphic to a geodesic sphere centered at $x_0$. By the smoothness of the metric $g$ at $x_0$, the induced metric $\overline g$ on $N^{n-1}$ is round. 

We consider the third case. It is immediate that $(M,g,F)$ is compact and rotationally symmetric.
\end{proof}

By Lemma $\ref{main}$, we recover the classification of three-dimensional complete gradient Yamabe solitons obtained in \cite{CSZ12}.
In fact, by the non-existence theorem for compact gradient Yamabe solitons (cf. \cite{Hsu12}), (1) of Lemma \ref{main} cannot happen. We will consider the case (3) of Lemma \ref{main}. 
By the soliton equation  $R-\rho=\varphi=F''$ and 
$$R=(F')^{-2}\bar R-2\Big(\frac{F''}{F'}\Big)^2-4\frac{F'''}{F'},$$
it follows that the scalar curvature $\bar R$ is constant. Therefore, $N^2$ is a space form.

\begin{corollary}\label{class3dim}
A nontrivial three-dimensional complete gradient Yamabe soliton $(M^3,g,F,\rho)$ is either

$(1)$ rotationally symmetric and equal to the warped product
$$([0,\infty),dr^2)\times_{|\n F|}(\mathbb{S}^{2},{\bar g}_{S}),$$
where $\bar g_{S}$ is the round metric on $\mathbb{S}^{2},$ or

$(2)$ the warped product 
$$(\mathbb{R},dr^2)\times_{|\nabla F|} \left(N^{2}(c),\bar g\right),$$
where $N^2(c)$ is a space form with constant curvature $c.$
\end{corollary}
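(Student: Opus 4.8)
The plan is to deduce the corollary directly from Theorem~\ref{main}. A gradient Yamabe soliton \eqref{YS} is a gradient conformal soliton \eqref{GCS} with potential $\varphi = R - \rho$, so Theorem~\ref{main} applies and leaves three possibilities. First I would rule out alternative $(1)$ of Theorem~\ref{main}: a compact nontrivial gradient Yamabe soliton does not exist by the non-existence result of Hsu \cite{Hsu12}, so $M$ cannot be compact. Alternative $(2)$ of Theorem~\ref{main} is exactly alternative $(1)$ of the corollary once one sets $n=3$, since then the fiber $\mathbb{S}^{n-1}$ is the round $\mathbb{S}^2$; no further work is needed there.

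The substantive case is alternative $(3)$ of Theorem~\ref{main}, where $M = (\mathbb{R}, dr^2) \times_{|\nabla F|}(N^2, \bar g)$ with $F' > 0$ on all of $\mathbb{R}$, and the task is to upgrade $N^2$ to a space form. I would run a separation-of-variables argument. On one hand, in the geodesic coordinate $r$ supplied by the preceding Proposition, $\nabla\nabla F = \varphi g$ gives $F'' = \varphi$, and the Yamabe equation gives $\varphi = R - \rho$; hence $R = \rho + F''(r)$ is a function of $r$ alone. On the other hand, the warped-product scalar curvature formula \eqref{RT3} specialized to $n=3$ reads $R = (F')^{-2}\bar R - 2(F''/F')^2 - 4F'''/F'$, in which $\bar R$ is the scalar curvature of the fiber $(N^2, \bar g)$ and so depends only on the point of $N^2$. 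Solving for $\bar R$ gives $\bar R = (F')^2\big[R + 2(F''/F')^2 + 4F'''/F'\big]$, whose right-hand side is a function of $r$ alone while its left-hand side is a function of the fiber variable alone; since $F' \neq 0$ the two sides are genuinely separated, so both equal a common constant. Thus $\bar R$ is constant, and because $N^2$ is two-dimensional this forces constant Gaussian curvature, i.e.\ $(N^2, \bar g)$ is a space form $N^2(c)$. This is alternative $(2)$ of the corollary.

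I do not expect a serious obstacle: the proof is a direct reading of Theorem~\ref{main} together with the soliton relation $\varphi = R - \rho$. The only points demanding care are confirming that $R$ is genuinely a function of $r$ alone --- this rests on the preceding Proposition, since $\varphi$ is constant on each regular level surface and these surfaces are exactly the slices $\{r = \mathrm{const}\}$ --- and noting that $F'$ never vanishes in alternative $(3)$, which is what legitimizes dividing by $(F')^{-2}$ and carrying out the separation of variables.
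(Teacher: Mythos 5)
Your proposal is correct and follows essentially the same route as the paper: rule out the compact case via Hsu's non-existence result, identify case $(2)$ of Theorem~\ref{main} with alternative $(1)$, and in case $(3)$ combine $R-\rho=\varphi=F''$ with the $n=3$ scalar curvature formula to conclude $\bar R$ is constant. The only difference is that you spell out the separation-of-variables step ($\bar R=(F')^2\bigl[R+2(F''/F')^2+4F'''/F'\bigr]$ depends on $r$ alone while $\bar R$ depends only on the fiber point), which the paper leaves implicit in the phrase ``one can get that the scalar curvature $\bar R$ is constant.''
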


Therefore, we give an affirmative partial answer to the Yamabe soliton version of Perelman's conjecture.

\begin{corollary}\label{PC0}
Any $3$-dimensional complete nontrivial nonflat gradient Yamabe soliton $(M^3,g,F,\rho)$ with $R\geq0$ is rotationally symmetric. 

\end{corollary}
\quad\\

%%%%%%%%%%%%%%%%%%%%%
%%%%%%%%%%%%%%%%%%%%%
%%%%%%%%%%%%%%%%%%%%%

\section{Proof of Theorem \ref{thmA}}\label{LCFGCS}

In this section, we prove Theorem \ref{thmA}. 
We first recall the Cotton tensor $C$ and the Weyl tensor $W$. 
\begin{align*}
C_{ijk}
=&\nabla_iS_{jk}-\nabla _jS_{ik}\\
=&\nabla_iR_{jk}-\nabla_jR_{ik}-\frac{1}{2(n-1)}(g_{jk}\nabla_iR-g_{ik}\nabla_jR),
\end{align*}
where $S=\Ric-\frac{1}{2(n-1)}Rg$ is the Schouten tensor.
 The Cotton tensor is skew-symmetric in the first two indices and totally trace-free, that is,
$$C_{ijk}=-C_{jik} \quad \text{and} \quad g^{jk}C_{ijk}=g^{ik}C_{ijk}=0.$$
\begin{align}\label{Weyl}
W_{ijk\ell}
=&R_{ijk\ell}-\frac{1}{n-2}(R_{ik}g_{j\ell}+R_{j\ell}g_{ik}-R_{i\ell}g_{jk}-R_{jk}g_{i\ell})\\
&+\frac{R}{(n-1)(n-2)}(g_{ik}g_{j\ell}-g_{i\ell}g_{jk}).\notag
\end{align}
As is well known, a Riemannian manifold $(M^n,g)$ is locally conformally flat if and only if 
(1) for $n\geq4$, the Weyl tensor vanishes; (2) for $n=3$, the Cotton tensor vanishes.
Moreover, for $n\geq4$, if the Weyl tensor vanishes, then the Cotton tensor vanishes. 
For $n=3$, the Weyl tensor always vanishes, whereas the Cotton tensor does not vanish in general.

First, we consider conformal gradient solitons under the weaker assumption that the Cotton tensor vanishes. This will be used to prove Theorem \ref{thmA}.

\begin{proposition}\label{subC}
A nontrivial complete conformal gradient soliton $(M^n,g,F,\varphi)$ with $C\equiv0$  is either

$(1)$ compact and rotationally symmetric, or

$(2)$ rotationally symmetric and equal to the warped product
$$([0,\infty),dr^2)\times_{|\n F|}(\mathbb{S}^{n-1},{\bar g}_{S}),$$
where $\bar g_{S}$ is the round metric on $\mathbb{S}^{n-1},$ or

$(3)$ the warped product 
$$(\mathbb{R},dr^2)\times_{|\nabla F|} \left(N^{n-1},\bar g\right),$$
where $N$ has constant scalar curvature $\bar R$.
Furthermore, if $R\geq0$, then either $\bar R>0$, or $R=\bar R=0$ and $(M,g)$ is isometric to the Riemannian product $(\mathbb{R},dr^2)\times(N^{n-1},\bar g).$ 
\end{proposition}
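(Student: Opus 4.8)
The plan is to treat the trichotomy of Theorem \ref{main} as known and to refine only its case $(3)$: in cases $(1)$ and $(2)$ the fibre is the round sphere $\mathbb{S}^{n-1}$, whose scalar curvature is automatically constant, so nothing is left to prove there. By the preceding Lemma the hypothesis $\operatorname{div}C\equiv 0$ is equivalent to the pointwise condition $C\equiv 0$, and I will use the latter.

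The key step is to evaluate one component of the Cotton tensor in the warped-product coordinates of case $(3)$, where $g=dr^2+(F')^2\bar g$, $\partial_1=\partial_r$, and the radial curves are geodesics so that $\Gamma^m_{11}=0$, while $\Gamma^b_{1a}=\tfrac{F''}{F'}\delta^b_a$, $\Gamma^1_{ab}=-\tfrac{F''}{F'}g_{ab}$ and $\Gamma^1_{1a}=0$. Using $R_{1a}=0$ and that $R_{11}$ depends only on $r$ from \eqref{RT2}, a short computation gives $\nabla_aR_{11}=0$ and $\nabla_1R_{a1}=0$; and since $R$ is a scalar, \eqref{RT3} yields $\nabla_aR=\partial_aR=(F')^{-2}\bar\nabla_a\bar R$, the purely radial terms being annihilated by $\partial_a$. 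The definition of the Cotton tensor then collapses to
\begin{equation*}
C_{a11}=\nabla_aR_{11}-\nabla_1R_{a1}-\frac{1}{2(n-1)}\nabla_aR=-\frac{1}{2(n-1)(F')^2}\,\bar\nabla_a\bar R .
\end{equation*}
Hence $C\equiv 0$ forces $\bar\nabla_a\bar R=0$ for every tangential index, so $\bar R$ is constant on $N$, which is the first assertion of case $(3)$.

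For the supplementary statement I would argue by the sign of this constant $\bar R$. Write $u:=F'=|\nabla F|>0$, which in case $(3)$ is a smooth positive function on all of $\mathbb{R}$. Since $\bar R$ is now a constant, \eqref{RT3} displays $R$ as a function of $r$ alone,
\begin{equation*}
R=u^{-2}\bar R-(n-1)(n-2)\Big(\frac{u'}{u}\Big)^2-2(n-1)\frac{u''}{u}.
\end{equation*}
If $\bar R>0$ we are in the first alternative. If $\bar R\le 0$, then $u^{-2}\bar R\le 0$ and $-(n-1)(n-2)(u'/u)^2\le 0$, so the hypothesis $R\ge 0$ forces $-2(n-1)u''/u\ge 0$, i.e. $u''\le 0$ throughout $\mathbb{R}$. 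A positive concave function on the whole line must be constant, since a nonconstant concave function is unbounded below in one direction; hence $u'\equiv u''\equiv 0$ and $R=u^{-2}\bar R\ge 0$, giving $\bar R\ge 0$ and therefore $\bar R=R=0$. A constant warping function makes the metric a Riemannian product $(\mathbb{R},dr^2)\times(N^{n-1},\bar g)$, which is the second alternative.

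I expect the only delicate points to be bookkeeping: tracking the sign conventions in \eqref{RT2}--\eqref{RT3} carefully enough that $C_{a11}$ really isolates $\bar\nabla_a\bar R$, and noting that in case $(3)$ the variable $r$ sweeps out the entire complete line so that the concavity argument is legitimate. The genuine content is the recognition that a single well-chosen Cotton component already yields constancy of $\bar R$; once that is in hand, the hypothesis $R\ge 0$ together with completeness finishes the argument through the elementary concavity observation, with no need for the full vanishing of $C$ or an ODE analysis of the warping function.
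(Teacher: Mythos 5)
Your proposal is correct and follows essentially the same route as the paper: it isolates the single Cotton component $C_{1a1}$ (your $C_{a11}=-C_{1a1}$, with the radial covariant-derivative terms vanishing) in the warped-product coordinates of case $(3)$ to get $\bar\nabla_a\bar R=0$, and then runs the same concavity argument on $u=F'>0$ over all of $\mathbb{R}$ for the supplementary statement. Incidentally, your coefficient $\tfrac{1}{2(n-1)}$ is the correct one for general $n$; the paper's $\tfrac14$ at this step is a slip that is only valid when $n=3$, though it does not affect the conclusion.
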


\begin{proof}
We only have to consider the case (3) of Lemma \ref{main}.
By the same argument as in the proof of Lemma \ref{main}, we have
\begin{align}
R_{11}=&-(n-1)\frac{F'''}{F'},\quad 
R_{1a}=0,\\
R_{ab}=&\bar R_{ab}-((n-2)(F'')^2+F'F''')\bar g_{ab},\notag
\end{align}
\begin{align}\label{vvRT3}
R=(F')^{-2}\bar R-(n-1)(n-2)\Big(\frac{F''}{F'}\Big)^2-2(n-1)\frac{F'''}{F'},
\end{align}
for $a,b=2,3,\cdots,n.$
By the definition of the Cotton tensor, we have  
\begin{align*}
C_{1a1}
=&\nabla_1R_{a1}-\nabla_aR_{11}-\frac{1}{2(n-1)}(g_{a1}\nabla_1R-g_{11}\nabla_aR)\\
=&\frac{1}{2(n-1)}\nabla_a R.
\end{align*}
From this and \eqref{vvRT3}, the scalar curvature $\bar R$ of $N$ is constant.
Assume that $R\geq 0$. If $\bar R\leq0$, then by \eqref{vvRT3}, $F'''\leq0$. 
Therefore, $F'(>0)$ is concave, which means that $F'$ is constant. 
By \eqref{vvRT3} again, $R=\bar R=0$ and $(M,g)$ is the Riemannian product $(\mathbb{R},dr^2)\times(N^{n-1},\bar g).$
\end{proof}

We next show the following lemma.

\begin{lemma}\label{nocriticalpoint}
Let $(M,g,F,\varphi)$ be a nontrivial complete locally conformally flat conformal gradient soliton. Assume that $F$ has no critical point. Then, $(M,g,F,\varphi)$ is a warped product
$$(\mathbb{R},dr^2)\times_{|\nabla F|} \left(N^{n-1}(c),\bar g\right),$$
where $(N^{n-1}(c),\bar g)$ is a space form.
\end{lemma}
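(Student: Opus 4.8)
The plan is to apply Theorem~\ref{main} to reduce to the warped-product case and then exploit local conformal flatness to force the fiber to be a space form. Since $F$ has no critical point and $(M,g)$ is complete, cases $(1)$ and $(2)$ of Theorem~\ref{main} are excluded (each produces a critical point of $F$, at the poles in the compact case and at $r=0$ in the second case), so $M$ must be the warped product $(\mathbb{R},dr^2)\times_{|\nabla F|}(N^{n-1},\bar g)$ and I may freely use the curvature formulas \eqref{RT1}--\eqref{RT3}. It then remains only to show that $(N^{n-1},\bar g)$ has constant sectional curvature.

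First I would dispose of the case $n=3$. Here the Weyl tensor vanishes automatically, so local conformal flatness is equivalent to $C=0$; by the equivalence $C\equiv0\Leftrightarrow{\rm div}\,C\equiv0$ we are then in the hypotheses of Proposition~\ref{subC}. Its conclusion $(3)$ gives that $\bar R$ is constant, and since $\dim N=2$ constant scalar curvature means constant Gauss curvature, so $(N^2,\bar g)$ is a space form $N^2(c)$.

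For $n\geq4$ local conformal flatness means $W\equiv0$, and I would read off the fiber geometry from two families of vanishing Weyl components. Using $g_{11}=1$, $g_{1a}=0$, $R_{1a}=0$ in \eqref{Weyl}, the mixed component is $W_{1a1b}=R_{1a1b}-\frac{1}{n-2}(R_{11}g_{ab}+R_{ab})+\frac{R}{(n-1)(n-2)}g_{ab}$; substituting \eqref{RT1}, \eqref{RT2} and $g_{ab}=(F')^2\bar g_{ab}$, the equation $W_{1a1b}=0$ expresses $\bar R_{ab}$ as a scalar multiple of $\bar g_{ab}$, and since $\bar R_{ab}$, $\bar g_{ab}$ are intrinsic to $N$ that multiple cannot depend on $r$, so $(N,\bar g)$ is Einstein. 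Feeding this back into the purely tangential component, $W_{abcd}=0$ writes $R_{abcd}$ through $R_{ac}$, $R_{bd}$, $\dots$ and $R$; inserting the Einstein relation together with \eqref{RT1} yields $\bar R_{abcd}=\mu(\bar g_{ac}\bar g_{bd}-\bar g_{ad}\bar g_{bc})$, and again the $r$-independence of the left side forces $\mu$ to be constant, so $(N^{n-1},\bar g)$ has constant sectional curvature and is $N^{n-1}(c)$.

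The main obstacle is the $n\geq4$ computation, precisely the step of confirming that the proportionality factors produced by $W_{1a1b}=0$ and by $W_{abcd}=0$ are genuine constants and not functions of $r$. This is settled by the observation that $\bar R_{ab}$, $\bar R_{abcd}$ and $\bar g$ are pulled back from $N$ and hence constant along the $r$-lines; equivalently one may invoke Schur's lemma, valid since $\dim N=n-1\geq3$, to promote pointwise proportionality to constancy. Once both factors are seen to be constant, the identification of $N$ as a space form is immediate, completing the proof in every dimension.
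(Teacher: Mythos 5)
Your proposal is correct and follows essentially the same route as the paper: reduce to case $(3)$ of Theorem~\ref{main} (with the exclusion of cases $(1)$ and $(2)$ made explicit, which the paper leaves implicit), dispose of $n=3$ via Proposition~\ref{subC}, and for $n\geq4$ extract the fiber geometry from the vanishing of $W_{1a1b}$ and $W_{abcd}$. The only cosmetic difference is at the last step: the paper deduces $\bar W_{abcd}=0$ and cites the fact that an Einstein, locally conformally flat manifold is a space form, whereas you derive the constant-curvature form of $\bar R_{abcd}$ directly and invoke Schur's lemma --- the same computation packaged slightly differently.
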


\begin{proof}
We consider $(3)$ of Lemma \ref{main}. By the same argument as in the proof of Lemma \ref{main}, one can obtain the curvature formulas for the warped product with warping function $|\n F|=F'(r)>0$. For $a,b,c,d=2,3,\cdots, n,$ 
\begin{align}\label{aRT1}
R_{1a1b}&=-F'F'''{\bar g}_{ab},\quad R_{1abc}=0,\\
R_{abcd}&=(F')^2{\bar R}_{abcd}+(F'F'')^2(\bar g_{ad}\bar g_{bc}-\bar g_{ac}\bar g_{bd}),\notag
\end{align}
\begin{align}\label{aRT2}
R_{11}=&-(n-1)\frac{F'''}{F'},\quad 
R_{1a}=0,\\
R_{ab}=&\bar R_{ab}-((n-2)(F'')^2+F'F''')\bar g_{ab},\notag
\end{align}
\begin{align}\label{aRT3}
R=(F')^{-2}\bar R-(n-1)(n-2)\Big(\frac{F''}{F'}\Big)^2-2(n-1)\frac{F'''}{F'}.
\end{align}

$\dim M=3$: By Proposition \ref{subC}, $N$ is a space form. 

$\dim M \geq4$:
By \eqref{Weyl}, \eqref{aRT1}, \eqref{aRT2} and \eqref{aRT3}, we have
\begin{align*}
W_{1a1b}=&-\frac{\bar R_{ab}}{n-2}+\frac{\bar R}{(n-1)(n-2)}\bar g_{ab},\\
W_{1abc}=&0,\\
W_{abcd}
=&(F')^2\Big(\bar W_{abcd}\\
&+\frac{1}{(n-2)(n-3)}
\Big\{\frac{2}{n-1}\bar R(\bar g_{ad}\bar g_{bc}-\bar g_{ac}\bar g_{bd})\\
&\hspace{100pt}
-(\bar R_{ad}\bar g_{bc}+\bar R_{bc}\bar g_{ad}-\bar R_{ac}\bar g_{bd}-\bar R_{bd}\bar g_{ac})\Big\}\Big).
\end{align*}
Since $M$ is locally conformally flat, we have
\begin{equation}\label{NisEin}
\bar R_{ab}=\frac{\bar R}{n-1}\bar g_{ab},
\end{equation}
and 
\begin{align}\label{NisLCF}
\bar W_{abcd}
=&-\frac{1}{(n-2)(n-3)}
\Big\{\frac{2}{n-1}\bar R(\bar g_{ad}\bar g_{bc}-\bar g_{ac}\bar g_{bd})\\
&\hspace{70pt}-(\bar R_{ad}\bar g_{bc}+\bar R_{bc}\bar g_{ad}-\bar R_{ac}\bar g_{bd}-\bar R_{bd}\bar g_{ac})\Big\}.\notag
\end{align}
Substituting \eqref{NisEin} into \eqref{NisLCF}, we have $\bar W_{abcd}=0.$
Therefore, $N$ is Einstein and locally conformally flat, which means that $N$ is a space form.
\end{proof}

Combining Lemma \ref{nocriticalpoint} with Lemma \ref{main}, we obtain the following.

\begin{theorem}\label{sub2}
A nontrivial complete locally conformally flat conformal gradient soliton $(M^n,g,F,\varphi)$ is either

$(1)$ compact and rotationally symmetric, or

$(2)$ rotationally symmetric and equal to the warped product
$$([0,\infty),dr^2)\times_{|\n F|}(\mathbb{S}^{n-1},{\bar g}_{S}),$$
where $\bar g_{S}$ is the round metric on $\mathbb{S}^{n-1},$ or

$(3)$ the warped product 
$$(\mathbb{R},dr^2)\times_{|\nabla F|} \left(N^{n-1}(c),\bar g\right),$$
where $(N^{n-1}(c),\bar g)$ is a space form.
\end{theorem}

We are now ready to prove Theorem \ref{thmA}.
By Theorem \ref{sub2}, we only need to show $\bar R>0$ in (3) of Theorem \ref{sub2}. 
Assume that $R\geq0$. If $\bar R\leq0$, by \eqref{aRT3}, $F'''\leq0$ on $\mathbb{R}$.
Therefore, $F'(>0)$ is concave, which means that $F'$ is constant. 
By \eqref{aRT3} again, $R=\bar R=0$, which means that $M$ is flat. Therefore, $\bar R>0$, and we obtain Theorem \ref{thmA}.
%\begin{theorem}\label{PC2}
%Any complete nontrivial nonflat locally conformally flat conformal gradient soliton with $R\geq0$ is rotationally symmetric. 
%\end{theorem}

%%%%%%%%%%%%%%%%%%%%%
%%%%%%%%%%%%%%%%%%%%%
%%%%%%%%%%%%%%%%%%%%%

\section{Gradient conformal solitons with a vanishing condition on the Cao-Chen tensor}\label{vanish}

For a gradient Ricci soliton satisfying $\Ric-\lambda g=\nabla\nabla F$, Cao and Chen introduced a new tensor $D$ (cf. \cite{CC13}, \cite{CC12}). We refer to this tensor as the {\it Cao-Chen tensor}. For $n\geq3$, 
\begin{align*}
D_{ijk}
=&\frac{1}{n-2}(R_{kj}\n_i F-R_{ki}\n_jF)\\
&+\frac{1}{(n-1)(n-2)} (R_{it}g_{jk}\n_t F-R_{jt}g_{ik}\n_t F)\\
&-\frac{R}{(n-1)(n-2)}(g_{kj}\n_i F-g_{ki}\n_j F).
\end{align*}
Roughly speaking, on gradient Ricci solitons, the Cao-Chen tensor estimates the difference between the Weyl tensor and the Cotton tensor, but an analogous interpretation does not hold for conformal gradient solitons.

Therefore, it is interesting to consider conformal gradient solitons with $D\equiv0$.

\begin{theorem}\label{main2}
A nontrivial complete conformal gradient soliton $(M^n,g,F,\varphi)$ with $D\equiv0$ is either

$(1)$ compact and rotationally symmetric, or

$(2)$ rotationally symmetric and equal to the warped product
$$([0,\infty),dr^2)\times_{|\n F|}(\mathbb{S}^{n-1},{\bar g}_{S}),$$
where $\bar g_{S}$ is the round metric on $\mathbb{S}^{n-1},$ or

$(3)$ the warped product 
$$(\mathbb{R},dr^2)\times_{|\nabla F|} \left(N_{Ein}^{n-1},\bar g\right),$$
where $N_{Ein}$ is an Einstein manifold. 
Furthermore, if $R\geq0$, then either $\bar R>0$, or $R=\bar R=0$ and $(M,g)$ is isometric to the Riemannian product $(\mathbb{R},dr^2)\times(N^{n-1},\bar g)$, where $N$ is Ricci flat. 
\end{theorem}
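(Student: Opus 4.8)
The plan is to invoke Theorem \ref{main}, which already guarantees that a nontrivial complete gradient conformal soliton is of one of the three types (1), (2), (3). In types (1) and (2) the manifold is rotationally symmetric, so nothing further is required; exactly as in the proof of Proposition \ref{subC}, the whole content of the theorem lies in analyzing type (3), the warped product $(\R,dr^2)\times_{|\n F|}(N^{n-1},\bar g)$. I would therefore assume we are in case (3) and use the curvature identities \eqref{RT2} and \eqref{RT3} together with the fact that $\n F=F'\partial_1$ has only a radial component, so that $\n_1 F=F'>0$ and $\n_a F=0$ for $a=2,\dots,n$.

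The key step is to feed this warped-product data into the Cao--Chen tensor. Because $\n F$ points along $\partial_1$ and $R_{1a}=0$, $g_{1a}=0$, almost every term in $D_{ijk}$ drops out for a suitable index choice; I would test the component $D_{a1b}$ with $a,b\in\{2,\dots,n\}$. Using $R_{1t}\n_t F=R_{11}F'$, a direct substitution should collapse the definition of $D$ to
\begin{equation*}
D_{a1b}=-\frac{F'}{n-2}\Big(R_{ab}+\frac{1}{n-1}R_{11}g_{ab}-\frac{R}{n-1}g_{ab}\Big).
\end{equation*}
Since $D\equiv0$ and $F'>0$, this forces $R_{ab}=\frac{R-R_{11}}{n-1}g_{ab}$. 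Substituting the warped-product expressions $R_{ab}=\bar R_{ab}-((n-2)(F'')^2+F'F''')\bar g_{ab}$ and $g_{ab}=(F')^2\bar g_{ab}$, one obtains $\bar R_{ab}=\mu(r)\,\bar g_{ab}$, where $\mu(r)$ is an explicit function of $r$ alone. The decisive observation is that the left-hand side $\bar R_{ab}$ and $\bar g_{ab}$ depend only on the fiber coordinate $x$; tracing gives $\mu(r)=\bar R(x)/(n-1)$, so $\mu$ is simultaneously a function of $r$ and of $x$, hence constant. Therefore $(N^{n-1},\bar g)$ is Einstein, which is the assertion $N=N^{n-1}_{Ein}$.

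For the final statement I would reproduce the concavity argument of Proposition \ref{subC}. Assume $R\geq0$ and suppose $\bar R\leq0$. In \eqref{RT3} the term $(F')^{-2}\bar R\leq0$ and the squared term $-(n-1)(n-2)(F''/F')^2\leq0$, so $R\geq0$ forces $-2(n-1)F'''/F'\geq0$, i.e.\ $F'''\leq0$. Thus $F'>0$ is a positive concave function on all of $\R$, hence constant, so $F''=F'''=0$; plugging back into \eqref{RT3} gives $R=(F')^{-2}\bar R$, and with $R\geq0$, $\bar R\leq0$ this yields $R=\bar R=0$. Constancy of the warping function $F'$ then makes $(M,g)$ the Riemannian product $(\R,dr^2)\times(N^{n-1},\bar g)$, and since $N$ is Einstein with $\bar R=0$ it is Ricci flat. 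The only real obstacle is isolating the index configuration $D_{a1b}$ that converts $D\equiv0$ into the pointwise Einstein relation; once that component is computed, every remaining step is a routine substitution or repeats Proposition \ref{subC}.
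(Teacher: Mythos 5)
Your proposal follows essentially the same route as the paper's proof: reduce to case (3) of Theorem \ref{main}, compute a mixed component of the Cao--Chen tensor (your $D_{a1b}$ is exactly $-D_{1ab}$, the component the paper uses, by skew-symmetry of $D$ in its first two indices), deduce $R_{ab}=\frac{R-R_{11}}{n-1}g_{ab}$ from $D\equiv0$ and $F'>0$, substitute the warped-product curvature identities to obtain the Einstein condition on $N$, and repeat the concavity argument of Proposition \ref{subC} verbatim for the rigidity statement when $R\geq0$. All of these steps are correct and match the paper.

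One intermediate claim is misstated, though harmlessly. After substituting \eqref{RT2} into $R_{ab}=\frac{R-R_{11}}{n-1}g_{ab}$ you assert $\bar R_{ab}=\mu(r)\,\bar g_{ab}$ with $\mu$ ``a function of $r$ alone''; this is not justified at that stage, since $\mu=\frac{R-R_{11}}{n-1}(F')^2+(n-2)(F'')^2+F'F'''$ involves the scalar curvature $R$ of $M$, which a priori depends on the fiber variable $x$ as well (through $\bar R$ in \eqref{RT3}), so your separation-of-variables step (``a function of $r$ equal to a function of $x$ is constant'') does not literally apply. The repair is precisely the computation the paper performs: eliminating $R$ via \eqref{RT3} makes every $r$-dependent term cancel identically, leaving $\mu=\bar R/(n-1)$, i.e. the pointwise Einstein relation $\bar R_{ab}=\frac{\bar R}{n-1}\bar g_{ab}$; constancy of $\bar R$ then follows from the Einstein condition (Schur's lemma, for fiber dimension at least $3$), as the paper asserts, rather than from the cross-variable trick. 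With that local fix your argument coincides with the paper's proof; everything else, including the final dichotomy $\bar R>0$ versus $R=\bar R=0$ with $(M,g)$ splitting as $(\mathbb{R},dr^2)\times(N^{n-1},\bar g)$ and $N$ Ricci flat, is exactly as in the paper.
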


\begin{proof}
We only have to consider (3) of Lemma \ref{main}. 
By the same argument as in the proof of Lemma \ref{main}, we have
\begin{align}\label{vRT2}
R_{11}=&-(n-1)\frac{F'''}{F'},\quad 
R_{1a}=0,\\
R_{ab}=&\bar R_{ab}-((n-2)(F'')^2+F'F''')\bar g_{ab},\notag
\end{align}
\begin{align}\label{vRT3}
R=(F')^{-2}\bar R-(n-1)(n-2)\Big(\frac{F''}{F'}\Big)^2-2(n-1)\frac{F'''}{F'},
\end{align}
for $a,b=2,3,\cdots,n.$
By the definition of the Cao-Chen tensor $D$, we have
\begin{align*}
D_{1ab}
=&\frac{1}{n-2}(R_{ba}\n_1 F-R_{b1}\n_aF)\\
&+\frac{1}{(n-1)(n-2)} (R_{1t}g_{ab}\n_t F-R_{at}g_{1b}\n_t F)\\
&-\frac{R}{(n-1)(n-2)}(g_{ba}\n_1 F-g_{b1}\n_a F)\\
=&\frac{1}{n-2}R_{ba}\n_1 F
+\frac{1}{(n-1)(n-2)} R_{11}g_{ab}\n_1 F\\
&-\frac{R}{(n-1)(n-2)}g_{ba}\n_1 F.
\end{align*}
Since the Cao-Chen tensor vanishes and $F'>0$, we have
$$R_{ab}=\frac{R-R_{11}}{n-1}g_{ab}.$$
From this, \eqref{vRT2} and \eqref{vRT3}, we have
$$\bar R_{ab}=\frac{\bar R}{n-1}\bar g_{ab}.$$
Hence, $N$ is an Einstein manifold. 
Thus, the scalar curvature $\bar R$ of $N$ is constant.
Assume that $R\geq 0$. If $\bar R\leq0$, then by \eqref{vRT3}, $F'''\leq0$. 
Therefore, $F'(>0)$ is concave, which means that $F'$ is constant. 
By \eqref{vRT3} again, $R=\bar R=0$ and $\overline{\rm Ric}=0$ on $N$. Therefore, $(M,g)$ is the Riemannian product $(\mathbb{R},dr^2)\times(N^{n-1},\bar g),$ where $N$ is Ricci flat.
\end{proof}

If $\dim N=n-1\leq3$, then the Einstein manifold $N$ is a space form. Hence, we obtain the following theorem.

\begin{theorem}
For $n=3,4$, a nontrivial complete conformal gradient soliton $(M^n,g,F,\varphi)$ with $D\equiv0$ is either

$(1)$ compact and rotationally symmetric, or

$(2)$ rotationally symmetric and equal to the warped product
$$([0,\infty),dr^2)\times_{|\n F|}(\mathbb{S}^{n-1},{\bar g}_{S}),$$
where $\bar g_{S}$ is the round metric on $\mathbb{S}^{n-1},$ or

$(3)$ the warped product 
$$(\mathbb{R},dr^2)\times_{|\nabla F|} \left(N^{n-1}(c),\bar g\right),$$
where $(N^{n-1}(c),\bar g)$ is a space form.
\end{theorem}

By the same argument as in Section \ref{LCFGCS}, we obtain Theorem \ref{thmB}.

%%%%%%%%%%%%%%%%%%%%%%%%%%%%%%%
%%%%%%%%%%%%%%%%%%%%%%%%%%%%%%%
%%%%%%%%%%%%%%%%%%%%%%%%%%%%%%%

%{\bf Acknowledgments.}~

%%%%%%%%%%%%%%%%%%%%%%%%%%%%%%%%%%%%%%
\bibliographystyle{amsbook}

\end{document}